\newtheorem{example}{Example}
\newtheorem{proposition}{Proposition}
\newtheorem{lemma}{Lemma}
\newtheorem{corollary}{Corollary}
\newtheorem{theorem}{Theorem}
\newtheorem{problem}{Problem}
\newtheorem{definition}{Definition}
\DeclareMathOperator{\supp}{supp}
\title{On branching points in the Gilbert--Steiner problem}
\author{Danila Cherkashin}
\affil{Institute of Mathematics and Informatics, Bulgarian Academy of Sciences, Sofia}
\begin{document}

\maketitle

\begin{abstract}
The Gilbert--Steiner problem is a generalization of the Steiner tree problem and specific optimal mass transportation, which allows the use additional (branching) point in a transport plan.
A specific feature of the problem is that the cost of transporting a mass $m$ along a segment of length $l$ is equal to $l \times m^p$ for a fixed $0 < p < 1$ and 
segments may end at points not belonging to the supports of given measures (branching points).

Main result of this paper determines all pairs of $(p,d)$ for which the Gilbert--Steiner problem in $\mathbb{R}^d$ admits only branching points of degree 3. 
Namely, it happens if and only if $d = 2$ or $p < 1/2$. 
\end{abstract}

\section{Introduction}


One of the first models for branched transport was introduced by Gilbert~\cite{gilbert1967minimum}. The difference from the optimal transportation problem is that the extra geometric points may be of use; this explains the naming in honor of Steiner. Sometimes it is also referred to as \textit{optimal branched transport}; a large part of book~\cite{bernot2008optimal} is devoted to this problem. Let us proceed with the formal definition.

\begin{definition}
    Let $\mu^+,\mu^-$ be two finite measures on a metric space $(X,\rho(\cdot,\cdot))$ with finite supports such that total masses $\mu^+(X)=\mu^-(X)$ are equal. Let $V\subset X$  be a finite set containing the support of the signed measure $\mu^+-\mu^-$, the elements of $V$ are called \emph{vertices}. Further, let  $E$ be a finite collection of unordered pairs $\{x,y\}\subset V$ which we call \emph{edges}. So, $(V,E)$ is a  simple undirected finite graph. Assume that for every $\{x,y\}\in E$ two non-zero real numbers $m(x,y)$ and $m(y,x)$ are defined so that $m(x,y)+m(y,x)=0$. This data set is called a \emph{$(\mu^+,\mu^-)$-flow} if 
    \[
    \mu^+ - \mu^- = \sum_{\{x,y\}\in E} m(x,y)\cdot (\delta_y-\delta_x)
    \]
    where $\delta_x$ denotes a delta-measure at $x$ (note that the summand $m(x,y)\cdot (\delta_y-\delta_x)$ is well-defined in the sense that it does not depend on the order of $x$ and $y$).
\end{definition}

Let $c\colon [0,\infty)\to [0,\infty)$ be a \emph{cost function}. The expression
\[
\sum_{\{x,y\}\in E} c(|m(x,y)|) \cdot \rho(x,y)
\]
is called the \emph{Gilbert functional} of the $(\mu^+,\mu^-)$-flow.

\begin{problem}[General finite Gilbert--Steiner problem] \label{pr:gen}
Find a $(\mu^+,\mu^-)$-flow which minimizes the Gilbert functional for $\mu^+,\mu^-$ with finite supports and a monotone, concave and non-negative function $c(x)$ such that $c(0)=0$.
\end{problem}

A solution to problem~\ref{pr:gen} is called a \emph{minimal flow}.
Note that concavity and non-negativity imply subadditivity, that is $c(m_1 + m_2) \leq c(m_1) + c(m_2)$.
The greatest interest is attracted by the case of the cost function $x^p$.

\begin{problem}[Gilbert--Steiner problem]
Find a $(\mu^+,\mu^-)$-flow which minimizes the Gilbert functional for $\mu^+,\mu^-$ with finite supports and $c(x) = x^p$ for $0 < p < 1$.
\end{problem}

Vertices from $\supp(\mu^+) \cup \supp(\mu^-)$ are called \textit{terminals}.
A vertex from $V \setminus \supp(\mu^+) \setminus \supp(\mu^-)$ is called a \textit{branching point}. Formally, we allow a branching point to have degree 2, but such points may be easily eliminated.

Local structure in the Gilbert--Steiner problem was discussed in~\cite{bernot2008optimal}, and the paper~\cite{lippmann2022theory} deals with planar case. A local picture around a branching point $b$ of degree 3 is clear due to the initial paper of Gilbert.
Similarly to the finding of the Fermat--Torricelli point in the celebrated Steiner problem one can determine the angles around $b$ in terms of masses (see Lemma~\ref{main_l}).

\begin{theorem}[Cherkashin--Petrov, 2025~\cite{cherkashin2025branching}]
\label{theorem:main}
Suppose that the cost function $c(\cdot)$ satisfies the decomposition
\begin{equation}\label{def_of_adm_fun}
    c(t)=\sqrt{\int \sin^2 (tx)\, d\lambda(x)}=\frac12\|e^{2itx}-1\|_{L^2(\lambda)},
\end{equation}
where $\lambda$ be a Borel measure on $\mathbb{R}$ with uncountable support such that $\int \min(x^2,1)d\lambda(x)<\infty$.

Then a solution to the planar general Gilbert--Steiner problem for $c(x)$ has no branching point of degree at least 4. In particular, this holds for $c(x) = x^p$ (via $d\lambda = const \cdot x^{-2p-1}dx$).
\end{theorem}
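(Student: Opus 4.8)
The plan is to argue locally and variationally: assume a minimal flow has a branching point $b$ of degree $k \ge 4$, and construct an admissible competitor of strictly smaller cost by \emph{splitting} $b$ into two branching points joined by a short edge. Since a global minimizer is in particular stable under compactly supported perturbations, exhibiting one beneficial splitting yields a contradiction. I will write the edges at $b$ as unit directions $u_1,\dots,u_k\in\mathbb{R}^2$ carrying signed masses $\tilde m_1,\dots,\tilde m_k$ with $\sum_i\tilde m_i=0$ (conservation at the non-terminal $b$), and abbreviate $a_i:=c(|\tilde m_i|)$.

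First I would record the stationarity (force balance) condition $\sum_{i} a_i u_i = 0$, obtained by differentiating the Gilbert functional in the position of $b$. Then, for a partition of the edges into groups $A$ and $B=A^c$, I would compute the first-order cost change of pulling the two groups apart: moving the group-$A$ node by $t\hat V_A$ and the group-$B$ node by $-t\hat V_A$, where $V_A:=\sum_{i\in A}a_i u_i$ and $\hat V_A := V_A/|V_A|$, while inserting a connecting edge carrying the net mass $M_A:=|\sum_{i\in A}\tilde m_i|$. A direct computation gives the derivative $2\,(c(M_A)-|V_A|)$. Hence $b$ is improvable as soon as some admissible partition satisfies the \emph{strict} inequality $|V_A|>c(M_A)$. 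For a degree-$3$ point one checks that every partition gives exactly $|V_A|=c(M_A)$ (using the triangle formed by the three balanced vectors together with conservation), which is why degree $3$ survives.

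The heart of the matter is therefore to prove that for $k\ge 4$ \emph{some} partition is strictly improving. Here planarity enters decisively: the directions $u_1,\dots,u_k$ carry a cyclic order, and the only geometrically meaningful splits are the non-crossing consecutive ones, which reduces the problem (after disposing of $k\ge 5$ by an even cheaper consecutive split, or by induction) to degree $4$ and its two non-crossing $2{+}2$ partitions. I would then assume for contradiction that both fail, i.e. $|V_A|\le c(M_A)$, and combine these with the force balance. This is exactly where the representation $c(t)=\tfrac12\|e^{2itx}-1\|_{L^2(\lambda)}$ is used: it realizes $m\mapsto \Phi(m):=e^{2imx}$ as an isometric embedding of $(\mathbb{R},c(|\cdot-\cdot|))$ into $L^2(\lambda)$, with $\|\Phi(a)-\Phi(b)\|=2c(|a-b|)$. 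The uncountability of $\supp\lambda$ upgrades the triangle and subadditivity inequalities for $c$ to \emph{strict} ones, since equality would force proportionality relations among the analytic functions $x\mapsto e^{2i\alpha x}-1$ to hold $\lambda$-a.e., hence on an uncountable set, hence identically — impossible for the distinct nonzero masses occurring here. Feeding these strict Hilbertian inequalities into the two failed-split relations should contradict the force balance.

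The main obstacle I anticipate is precisely this last rigidity computation: turning ``both non-crossing splits fail'' plus stationarity into a contradiction with the correct, \emph{strict} constant rather than a soft non-strict statement. The embedding supplies strict inequalities cleanly, but one must align the planar geometry (the angles between the $u_i$, encoded by the inner products $v_i\cdot v_j$) with the Hilbert geometry of the masses (the inner products $\langle \Phi(\tilde m_i),\Phi(\tilde m_j)\rangle$), and these live in different spaces; controlling their interaction quantitatively is the delicate point. Finally I would verify the stated application: for $c(x)=x^p$ with $0<p<1$ the identity $\int_0^\infty \sin^2(tx)\,x^{-2p-1}\,dx = C_p\,t^{2p}$ (convergent at both ends precisely because $0<p<1$) exhibits $d\lambda = C\,x^{-2p-1}dx$, whose support is all of $(0,\infty)$ and hence uncountable, so the theorem applies.
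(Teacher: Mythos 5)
This theorem is not proved in the present paper at all: it is imported verbatim from the cited reference \cite{cherkashin2025branching}, so there is no in-paper proof to compare against. Judged on its own merits, your proposal has a genuine gap at its core. The preparatory material is correct and matches the standard ingredients: the force balance $\sum_i a_i u_i=0$, the splitting competitor whose first-order cost change is $2(c(M_A)-|V_A|)$, the observation that non-improvability forces $|V_A|\le c(M_A)$ for every subset $A$ (for $|A|=2$ this is exactly the ``satisfactory configuration'' inequality~\eqref{eq:sat} used elsewhere in the paper), the identity $\|\Phi(a)-\Phi(b)\|_{L^2(\lambda)}=2c(|a-b|)$ for $\Phi(m)=e^{2imx}$, and the mechanism by which uncountable support of $\lambda$ upgrades non-strict $L^2$ inequalities to strict ones (equality would force proportionality of the analytic functions $e^{2i\alpha x}-1$ on a set with an accumulation point, hence everywhere). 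But the entire content of the theorem lies in the step you defer with ``should contradict the force balance'': showing that for $k=4$ in the plane the satisfaction inequalities, the force balance, and the Hilbert-space relations among $\Phi(\tilde m_1),\dots,\Phi(\tilde m_4)$ are jointly inconsistent. You explicitly name this as ``the delicate point'' and do not carry it out, so what you have is a plan for a proof rather than a proof. This is not a pedantic complaint: for $p\ge 1/2$ the function $c^2$ is superadditive, so the naive route (find an adjacent pair with $\langle u_i,u_j\rangle\ge 0$ and contradict subadditivity of $c^2$) fails, and one genuinely needs a quantitative interaction between the planar angles of the $u_i$ and the $L^2(\lambda)$ geometry of the partial-sum potentials. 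Without that computation the argument does not close.

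Two smaller points. First, your restriction to ``non-crossing consecutive'' splits is unnecessary: the splitting competitor is an admissible flow for \emph{every} subset $A$ (flows may cross), so all $2^k$ conditions are available, which only helps you; conversely, your reduction of $k\ge 5$ to $k=4$ ``by an even cheaper consecutive split, or by induction'' is asserted rather than argued and would also need to be justified. Second, your verification of the application to $c(x)=x^p$ via $\int_0^\infty\sin^2(tx)\,x^{-2p-1}dx=C_p t^{2p}$ is correct and complete.
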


Note that condition~\eqref{def_of_adm_fun} implies $c(0)=0$, non-negativity and subadditivity of 
$c$; on the other hand it does not imply monotonicity.

It is known that an optimal flow for a strictly concave $c(\cdot)$ does not contain cycles (see Proposition 7.8 in~\cite{bernot2008optimal}). Thus it can be represented as a collection of trees.

An \textit{irrigation} (or arborescence) setup, which restricts $\mu^+$ to be a Dirac measure was independently considered in Minkowski space by Volz, Brazil, Ras, Swanepoel and Thomas~\cite{volz2013gilbert}. The arborescence naming is given because in this case an optimal flow corresponds to a tree.
In a Euclidean space (of an arbitrarily dimension) they obtained that a degree of a branching is at most 3 provided that $c(\cdot)$ is 
a concave monotone function, the function $(c^2)'$ is convex and $c(0) > 0$. Note that for $p \in (1/2,1)$ the convexity condition fails.

Main result of this paper is the following theorem.

\begin{theorem} \label{th:main}
If $d = 2$ or $p < 1/2$ then degree of any branching point in a solution to the Gilbert--Steiner problem in $\mathbb{R}^d$ 
is equal to three.
If $p \geq 1/2$ then there is a solution to the Gilbert--Steiner problem consisting of $d+1$ segment adjacent to the same branching point.
\end{theorem}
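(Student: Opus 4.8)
The plan is to reduce the entire statement to the analysis of a single \emph{splitting perturbation} at a branching point. Fix a branching point $b$ of degree $k$ whose incident edges carry masses $m_1,\dots,m_k$ in the unit directions $e_1,\dots,e_k$ pointing from $b$ towards its neighbours. Since $b$ is not a terminal, optimality of its position gives the weighted balance $\sum_{i=1}^{k}c(m_i)\,e_i=0$ with $c(t)=t^{p}$. Now move $b$ to $b'=b+\varepsilon\nu$, detach a proper non-empty subset $S$ of edges, reattach them at $b'$, and join $b'$ to $b$ by a new segment; by conservation this segment carries the net mass $m_S=\bigl|\sum_{i\in S}\sigma_i m_i\bigr|$, where $\sigma_i=\pm1$ records the orientation. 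A first-order expansion of the Gilbert functional gives the cost change $\varepsilon\bigl(c(m_S)-\nu\cdot\sum_{i\in S}c(m_i)e_i\bigr)$, which, optimised over $\nu$, shows that the flow can be strictly improved precisely when
\begin{equation*}
c(m_S)<\Bigl|\sum_{i\in S}c(m_i)\,e_i\Bigr|\qquad\text{for some proper non-empty }S .
\end{equation*}
A branching point thus survives in a minimal flow only if the reverse inequality holds for all $S$. For $d=2$ the claim is then immediate: Theorem~\ref{theorem:main} already forbids degree $\ge 4$, while a degree-$2$ point is eliminable (by subadditivity it lies on a segment), leaving degree exactly $3$.

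For $p<1/2$ and arbitrary $d$ I would prove that every point of degree $k\ge 4$ admits an improving $S$. The starting observation is that $p<1/2$ makes $t\mapsto t^{2p}$ strictly subadditive, so for a same-orientation pair $S=\{i,j\}$ one has $c(m_i+m_j)^2<c(m_i)^2+c(m_j)^2$, and squaring turns the splitting criterion into the angle condition $e_i\cdot e_j>\cos\theta^{*}_{ij}$ with $\cos\theta^{*}_{ij}<0$; the analogous estimate for a mixed pair (where $m_S=|m_i-m_j|$) likewise forces $e_i\cdot e_j<0$ at a stable point. Hence stability already implies that the $e_i$ are pairwise strictly obtuse, whence $k\le d+1$. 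The genuine difficulty, and the main obstacle of the whole argument, is the remaining range $4\le k\le d+1$, where the $e_i$ may be so widely spread (as in a regular simplex) that no \emph{pairwise} split is first-order improving. Here I would follow the convexity route of Volz--Brazil--Ras--Swanepoel--Thomas~\cite{volz2013gilbert}: the condition that $(c^2)'$ be convex is equivalent to $p\le 1/2$, and it produces a strictly negative opening term for a carefully chosen multi-edge subset $S$ selected via the balance relation. Two points need care, since \cite{volz2013gilbert} assumes the arborescence restriction and $c(0)>0$: I would remove the first by localising to a single branching (a tree is an arborescence once a root direction is fixed) and the second by the approximation $c_\delta(t)=(t+\delta)^{p}$ with $\delta\to 0$.

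For $p\ge 1/2$ I would exhibit the sharp example through the fully symmetric \emph{$1$-to-$d$} configuration. Place $d$ equal sinks at the vertices of a regular simplex whose axis of symmetry passes through $b$ and a single source of $d$ times the mass on that axis; the trunk then carries mass $M=dm$, each of the $d$ branches carries mass $m$, and the balance relation fixes the opening angle through $\cos\gamma=d^{p-1}$. First-order optimality of $b$ is automatic by symmetry, and the content is that \emph{no} split improves it: for a pair of branches the criterion reduces to
\begin{equation*}
\frac{d^{2p-1}-1}{d-1}\;\le\;2^{2p-1}-1 ,
\end{equation*}
which is exactly the statement that the secant slopes of the concave function $t\mapsto t^{2p-1}$ (concave precisely for $p\ge 1/2$) decrease, with equality throughout at the borderline $p=1/2$; the trunk--branch and larger-subset splits are controlled by the same secant inequality.

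Finally I would upgrade first-order stability to genuine minimality for this terminal data. The optimal flow is a tree, any competing topology is reached from the symmetric star by a sequence of local splits, and each such split is non-improving for $p\ge 1/2$ by the computation above; a direct cost comparison over the finitely many tree topologies then certifies that the star of $d+1$ segments is a minimal flow, i.e. the desired example. The delicate part of this last step, where I expect to spend the most effort, is excluding \emph{asymmetric} competitors; for these I would combine the secant monotonicity with the symmetry of the data to show that de-symmetrising never lowers the cost.
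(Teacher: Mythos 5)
Your overall architecture matches the paper's: a local splitting criterion at the branching point (your subset condition is exactly the paper's ``satisfactory configuration'', extended from pairs to all subsets), Theorem~\ref{theorem:main} for $d=2$, pairwise obtuseness plus a convexity argument for $p<1/2$, and the symmetric star for $p\geq 1/2$. However, two steps that you defer or wave at are precisely where the paper does its real work, and as written they are gaps.

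First, for $p<1/2$ in general dimension, your reduction to the Volz--Brazil--Ras--Swanepoel--Thomas setting via ``a tree is an arborescence once a root direction is fixed'' is not correct. Rooting the tree fixes a combinatorial orientation, but it does not make the flow an irrigation flow: with general $\mu^+,\mu^-$ a branching point can have several incident edges of each sign (e.g.\ two positive and two negative masses), whereas the arborescence argument assumes a unique ``trunk'' carrying the negative of the sum. The paper closes exactly this hole in Theorem~\ref{th:stupid2}: it merges all positive masses into the one of maximal absolute value and proves, by a monotonicity computation on $m\mapsto \frac{c^2(m-n)-c^2(m)-c^2(n)}{2c(m)c(n)}$, that the merged configuration is still satisfactory, reducing to the single-negative-mass case of Theorem~\ref{th:stupid1} (which itself is a concrete Karamata argument, not just ``a carefully chosen multi-edge subset''). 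Your $\delta$-regularisation $c_\delta(t)=(t+\delta)^p$ also does not obviously transfer: a configuration satisfactory for $c$ need not be satisfactory for $c_\delta$, so the contradiction does not pass through the limit without further argument.

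Second, for $p\geq 1/2$ your upgrade from first-order stability of the star to genuine minimality does not go through as described. The claim that ``any competing topology is reached from the star by a sequence of local splits, each non-improving'' does not yield minimality: non-improving first-order perturbations do not compose, and a finite sequence of individually non-improving splits can still land on a strictly cheaper network; likewise ``a direct cost comparison over the finitely many tree topologies'' still requires optimising branch point positions in each topology. The paper avoids all of this with the calibration argument of Theorem~\ref{th:volz} (reproved in the source version as Theorem~\ref{th:stupid3}): the subset inequalities~\eqref{eq:split1} let one bound the cost of an \emph{arbitrary} competitor edge by edge, via
\[
\left\langle \sum_{i\in S_e} c(m_i)e_i,\,x-y\right\rangle \leq c\Bigl(\sum_{i\in S_e} m_i\Bigr)\|x-y\|,
\]
so local optimality of a star implies global optimality at once, with no case analysis over topologies and no separate treatment of asymmetric competitors. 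Your secant-slope inequality $\frac{d^{2p-1}-1}{d-1}\leq \frac{|I|^{2p-1}-1}{|I|-1}$ is the right verification of~\eqref{eq:split1}, but without the local-to-global theorem it only certifies a critical point, not a solution.
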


\paragraph{Structure of the paper.} Section~\ref{sec:pre} contains well-known lemmas which are used in our proofs.
Section~\ref{sec:upper} adopt and simplify methods from~\cite{volz2013gilbert}; namely we show (Theorems~\ref{th:stupid1} and~\ref{th:stupid2}) that the condition $c(0) > 0$ can be replaced with strict convexity of $(c^2)'$ which is more natural in the context of optimal transportation and that the result can be generalized from irrigation to the general case.
Section~\ref{sec:lower} provides an example of branching with degree $d + 1$ for $p \geq 1/2$ in $d$-dimensional Euclidean space. In Section~\ref{sec:source} we study behavior of a solution in a neighborhood of a terminal point and finally Section~\ref{sec:discussion} discusses applications and open problems.

\section{Preliminaries} \label{sec:pre}

We will need the following lemmas. 

\begin{lemma}[Folklore]  \label{main_l}
Let $PQR$ be a triangle and $w_1$, $w_2$, $w_3$ be non-negative reals. 
For every point $X \in \mathbb{R}^2$ consider the value
\[
L(X) := w_1 \cdot  |PX| + w_2 \cdot  |QX| + w_3 \cdot  |RX|.
\]
Then 
\begin{itemize}
    \item [(i)] a minimum of $L(X)$ is achieved at a unique point $X_{min}$;
    \item [(ii)] if $X_{min} = P$ then $w_1 \geq w_2 + w_3$ or there is a triangle $\Delta$ with sides $w_1$, $w_2$, $w_3$ and $\angle P$ is at least the exterior angle between $w_2$ and $w_3$ in $\Delta$.
\end{itemize}
\end{lemma}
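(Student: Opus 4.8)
The plan is to treat $L$ as a convex function on $\mathbb{R}^2$ and to read off both statements from convexity together with the first-order optimality condition at $P$.

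For part (i), I would first observe that each summand $X\mapsto|PX|,|QX|,|RX|$ is convex, being a translate of the Euclidean norm, so $L$ is convex as a non-negative combination and coercive provided not all weights vanish; hence a minimizer exists. Uniqueness comes from \emph{strict} convexity: restricted to any segment $[X_0,X_1]$, the term $|X-P|$ is strictly convex unless the whole segment lies on a line through $P$. Since $P,Q,R$ are the vertices of a nondegenerate triangle they are not collinear, so for every segment at least one of the three base points lies off the line carrying it, and the corresponding term (with positive weight) is strictly convex there. Thus $L$ is strictly convex along every segment, which forces the minimizer to be unique.

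For part (ii), I would use that a convex function attains its minimum at $P$ iff all one-sided directional derivatives at $P$ are non-negative. The two smooth terms contribute gradients $-w_2\hat e_Q$ and $-w_3\hat e_R$ at $P$, where $\hat e_Q,\hat e_R$ are the unit vectors from $P$ toward $Q$ and $R$, while the non-smooth term $w_1|X-P|$ has directional derivative $w_1$ in every unit direction $u$. Hence $D_uL(P)=w_1-(w_2\hat e_Q+w_3\hat e_R)\cdot u$, and minimizing over $u$ the optimality condition $D_uL(P)\ge 0$ for all $u$ collapses to the single inequality
\[
|w_2\hat e_Q+w_3\hat e_R|\le w_1,\qquad\text{i.e.}\qquad w_2^2+w_3^2+2w_2w_3\cos\angle P\le w_1^2 .
\]
It then remains to translate this into the stated dichotomy. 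If $w_1\ge w_2+w_3$, the bound holds automatically since $\cos\angle P\le 1$, giving the first alternative. Otherwise I would note that $w_1^2\ge w_2^2+w_3^2+2w_2w_3\cos\angle P$ together with $\cos\angle P>-1$ (a genuine triangle has $\angle P\in(0,\pi)$) forces $w_1>|w_2-w_3|$, so that with $w_1<w_2+w_3$ the numbers $w_1,w_2,w_3$ satisfy the strict triangle inequalities and a nondegenerate triangle $\Delta$ exists. Writing $\gamma$ for its interior angle opposite the side $w_1$, the law of cosines gives $w_1^2=w_2^2+w_3^2-2w_2w_3\cos\gamma$, and substituting turns the inequality into $\cos\angle P\le-\cos\gamma=\cos(\pi-\gamma)$; since cosine is decreasing on $[0,\pi]$ this is precisely $\angle P\ge\pi-\gamma$, the exterior angle between $w_2$ and $w_3$ in $\Delta$.

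The main obstacle I expect is the bookkeeping of degenerate cases rather than the core computation: securing strict convexity (hence uniqueness) when a weight may vanish, and checking that $\Delta$ is genuinely nondegenerate so that $\gamma$ and its exterior angle are well defined. Both are controlled by using that $PQR$ is nondegenerate (so $\angle P\in(0,\pi)$ and the base points are non-collinear) and that the weights arising in the application are strictly positive.
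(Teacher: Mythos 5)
The paper states this lemma as ``Folklore'' and gives no proof of its own, so there is nothing to compare against line by line; your argument is the standard one that the paper implicitly relies on, and it is correct. Part (i) via convexity, coercivity, and strict convexity along any segment (using non-collinearity of $P,Q,R$), and part (ii) via the first-order condition $D_uL(P)=w_1-\langle w_2\hat e_Q+w_3\hat e_R,u\rangle\ge 0$ collapsing to $|w_2\hat e_Q+w_3\hat e_R|\le w_1$ and then the law of cosines, is exactly how this is usually done; your reduction of the dichotomy (if $w_1<w_2+w_3$ then $\cos\angle P>-1$ forces $w_1>|w_2-w_3|$, so $\Delta$ is nondegenerate and the inequality becomes $\angle P\ge\pi-\gamma$) is clean and complete. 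The one genuine caveat you raise is real: with merely non-negative weights, uniqueness in (i) can fail (e.g.\ $w_1=w_2>0$, $w_3=0$ makes $L$ constant on the segment $PQ$), so the lemma as literally stated is slightly too generous; this is a defect of the statement rather than of your proof, and you correctly observe that the weights occurring in the paper's applications (values of the cost function at nonzero masses) are strictly positive, which restores strict convexity along every segment and hence uniqueness.
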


Consider a Gilbert flow and two adjacent edges with masses $m_1$ and $m_2$ (they may be of different signs).
Lemma~\ref{main_l} compares it with a blue tripod competitor with additional branching point, see Fig.~\ref{fig:maincomp}.

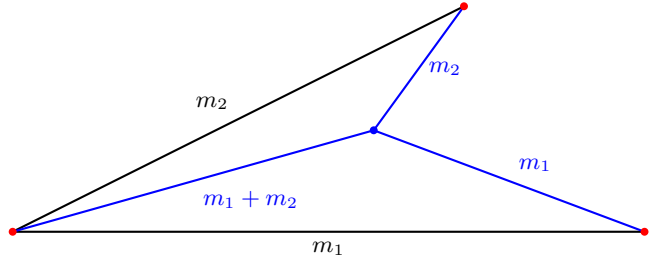
\begin{figure}[h]
    \centering  
    \begin{tikzpicture}[scale=3, every node/.style={font=\small}]

\coordinate (A) at (0,0);
\coordinate (B) at (2.8,0);
\coordinate (C) at (2,1);
\coordinate (P) at (1.6,0.45); 

\draw[thick] (A) -- (B) node[midway, below] {$m_1$};
\draw[thick] (A) -- (C) node[midway, above left] {$m_2$};

\draw[blue, thick] (A) -- (P) node[pos=0.5, below right] {$m_1 + m_2$};
\draw[blue, thick] (P) -- (B) node[midway, above right] {$m_1$};
\draw[blue, thick] (P) -- (C) node[midway, right] {$m_2$};

\foreach \pt in {A,B,C}
  \fill[red] (\pt) circle (0.5pt);

\fill[blue] (P) circle (0.5pt);

\end{tikzpicture}
    \caption{Basic optimality test: comparison of two adjacent edges and a tripod}
    \label{fig:maincomp}
\end{figure}

Another application of Lemma~\ref{main_l} immediately describes the behavior of a minimal flow in a neighborhood of a triple branching. The angles around the point are equal to the exterior angles of a triangle with sides $c(m_1)$, $c(m_2)$ and $c(m_3)$ (where outsource $m_1+m_2+m_3 = 0$).

\begin{definition}
We say that a non-increasing $n$-tuple $x_1, \dots, x_n$ \textit{majorizes} a non-increasing $n$-tuple $y_1, \dots, y_n$ if
the inequalities
\[
x_1 + \dots + x_i \geq y_1 + \dots + y_i \quad \text{hold for all } i \in \{1, \dots, n-1\},
\]
and 
\[
x_1 + \dots + x_n = y_1 + \dots + y_n.
\]
\end{definition}

\begin{lemma}[Karamata's inequality] \label{lm:Karineq}
Let $I \subset \mathbb{R}$ be an interval and let $f : I \to \mathbb{R}$ be a convex function. If $x_1, \dots, x_n, y_1, \dots, y_n \in I$ are such that 
$(x_1, \dots, x_n)$ majorizes $ (y_1, \dots, y_n)$, then
\[
f(x_1) + \cdots + f(x_n) \geq f(y_1) + \cdots + f(y_n).
\]

If $f$ is strictly convex, then the equality holds if and only if we have $x_i = y_i$ for all $i \in \{1, \dots, n\}$.
\end{lemma}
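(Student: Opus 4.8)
The plan is to reduce the statement to a one-line Abel summation (summation by parts) once convexity has been encoded as a supporting-line inequality. Since $f$ is convex on $I$, it possesses a finite right derivative at every interior point, so I would set $c_i := f'_+(y_i)$, the right-hand derivative of $f$ at $y_i$. The supporting-line property of convex functions then gives, for each $i$,
\[
f(x_i) \ge f(y_i) + c_i\,(x_i - y_i),
\]
because $x_i \in I$. Summing over $i$ reduces the whole problem to proving the single inequality $\sum_{i=1}^n c_i (x_i - y_i) \ge 0$.

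Next I would feed in the majorization hypothesis through partial sums. Writing $d_i := x_i - y_i$ and $D_i := d_1 + \dots + d_i = (x_1 + \dots + x_i) - (y_1 + \dots + y_i)$, the definition of majorization says exactly that $D_i \ge 0$ for $i < n$ and $D_n = 0$. Abel's summation by parts yields
\[
\sum_{i=1}^n c_i d_i = \sum_{i=1}^{n-1} (c_i - c_{i+1})\,D_i + c_n D_n,
\]
and the last term vanishes because $D_n = 0$. The crucial monotonicity input is that the right derivative of a convex function is non-decreasing, so from $y_i \ge y_{i+1}$ I obtain $c_i = f'_+(y_i) \ge f'_+(y_{i+1}) = c_{i+1}$, i.e. $c_i - c_{i+1} \ge 0$. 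Combined with $D_i \ge 0$, every summand is non-negative, so $\sum c_i d_i \ge 0$ and the inequality follows.

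For the equality characterisation under strict convexity I would organise the estimate as
\[
\sum_{i=1}^n f(x_i) - \sum_{i=1}^n f(y_i) = \sum_{i=1}^n \bigl[\,f(x_i) - f(y_i) - c_i(x_i - y_i)\,\bigr] + \sum_{i=1}^n c_i (x_i - y_i),
\]
where both grouped sums are non-negative by the two steps above. Equality in the lemma forces each of them to vanish; in particular every bracket $f(x_i) - f(y_i) - c_i(x_i - y_i)$ must be zero. For strictly convex $f$ the supporting line at $y_i$ touches the graph only at $y_i$, so this can hold only when $x_i = y_i$, giving the claimed rigidity (the converse being trivial).

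The step I expect to be the only genuinely delicate one is the monotone selection of subgradients when several of the $y_i$ coincide: an arbitrary subgradient choice at repeated points need not satisfy $c_i \ge c_{i+1}$. Fixing $c_i$ to be the right derivative $f'_+(y_i)$ removes this ambiguity, since $f'_+$ is an honest non-decreasing function of its argument; one should also remark that $f'_+$ is finite at interior points of $I$, and that the boundary case where some $y_i$ is an endpoint of $I$ is covered by the same supporting-line inequality with $c_i$ taken to be the appropriate one-sided derivative.
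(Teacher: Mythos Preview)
The paper does not prove this lemma at all: it is listed in Section~\ref{sec:pre} as a standard preliminary result (alongside the folklore Lemma~\ref{main_l}) and is simply invoked later in the proof of Theorem~\ref{th:stupid1}. So there is no ``paper's own proof'' to compare against.

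Your argument is correct and is in fact the standard textbook proof of Karamata's inequality: encode convexity via a supporting line at each $y_i$, then use Abel summation together with the monotonicity of the one-sided derivative and the sign conditions on the partial sums $D_i$ coming from majorization. The equality analysis is also handled cleanly by splitting the defect into the supporting-line residuals and the Abel sum, and using that strict convexity forces each residual $f(x_i)-f(y_i)-c_i(x_i-y_i)$ to vanish only when $x_i=y_i$. Your closing remark about choosing $c_i=f'_+(y_i)$ to guarantee $c_i\ge c_{i+1}$ even when several $y_i$ coincide, and about switching to the appropriate one-sided derivative at an endpoint of $I$, correctly anticipates the only places where a careless write-up can go wrong.
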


\section{Upper bounds on the degree of a branching point} \label{sec:upper}

Hereafter we consider a Euclidean space $\mathbb{R}^d$, $d \geq 2$. Recall that a cost function $c(x)$ is always concave, increasing and $c(0) = 0$.

\begin{definition}
 We say that the set of unit vectors $e_1, \dots e_k \in \mathbb{R}^d$ with masses $m_1,\dots,m_k$ is \emph{satisfactory configuration} if $m_1 + \dots + m_k = 0$ and the inequality
 \begin{equation} \label{eq:sat}
 \langle e_i,e_j \rangle \leq \frac{c^2(m_i + m_j) - c^2(m_i) - c^2(m_j)}{2c(m_i)c(m_j)} 
 \end{equation}
 holds for every $i \neq j$.
\end{definition}

Obviously, if $A$ is a source of a branching point of an optimal flow then the unit vectors emanating along the flow edges adjacent to $A$ with the corresponding masses produces a satisfactory configuration. Otherwise, by Lemma~\ref{main_l} a tripod is shorter than a pair of edges which fails~\eqref{eq:sat}. Thus a local transformation produces a better competitor, contradiction.

\begin{theorem} \label{th:stupid1}
Let $c$ be a cost function such that $c^2(x)$ has a strictly convex derivative. 
Then there is no satisfactory configuration with unique negative mass and $k > 3$. 
If $k=3$ then $e_1,e_2$ and $e_3$ are coplanar.
\end{theorem}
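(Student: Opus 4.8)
The plan is to convert the geometric constraints \eqref{eq:sat} into a single scalar inequality about the masses, and then to prove that inequality from the strict convexity of $(c^2)'$.

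\medskip\noindent\textbf{Reduction to a quadratic form.}
Write $f:=c^2$ and, after relabelling, assume $m_1<0<m_2,\dots ,m_k$ with $S:=-m_1=\sum_{j\ge 2}m_j$. Collect the data of \eqref{eq:sat} into the symmetric matrix $M=(\alpha_{ij})$, where $\alpha_{ii}=1$ and, for $i\ne j$, $\alpha_{ij}$ is the right-hand side of \eqref{eq:sat}. If $e_1,\dots ,e_k$ is a satisfactory configuration, then its Gram matrix $G=(\langle e_i,e_j\rangle)$ is positive semidefinite, has unit diagonal, and satisfies $G_{ij}\le \alpha_{ij}$. Testing against the strictly positive vector $y=(c(m_1),\dots ,c(m_k))$ and using $y_iy_j>0$ together with $G_{ij}\le\alpha_{ij}$, I obtain
\[
0\le y^{\top}Gy=\sum_i y_i^2+2\sum_{i<j}y_iy_j\langle e_i,e_j\rangle\le \sum_i y_i^2+2\sum_{i<j}y_iy_j\alpha_{ij}=:Q .
\]
So it suffices to control the sign of $Q=y^{\top}My$.

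\medskip\noindent\textbf{The scalar inequality.}
Since $c(m_i)c(m_j)\alpha_{ij}=\tfrac12\big(f(|m_i+m_j|)-f(|m_i|)-f(|m_j|)\big)$ and $y_i^2=f(|m_i|)$, a direct computation gives
\[
Q=\sum_{i<j}f(|m_i+m_j|)-(k-2)\sum_i f(|m_i|).
\]
The claim I must establish is $Q\le 0$, with equality exactly when $k\le 3$. Granting it: for $k\ge 4$ this contradicts $Q\ge 0$ from the previous step, so no such configuration exists; for $k=3$ it forces $Q=0$, hence $y^{\top}Gy=0$, hence $\sum_i c(m_i)e_i=0$, so $e_1,e_2,e_3$ are linearly dependent and therefore coplanar.

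\medskip\noindent\textbf{Proving $Q\le 0$ via convexity of $(c^2)'$.}
Using $f(x)=\int_0^x f'(t)\,dt$ (recall $f(0)=0$), I rewrite $Q$ through the layer-cake formula
\[
Q=\int_0^\infty f'(t)\,D(t)\,dt,\qquad D(t)=\#\{i<j:\ |m_i+m_j|>t\}-(k-2)\,\#\{i:\ |m_i|>t\}.
\]
From $\sum_i m_i=0$ one checks that $Q$ vanishes whenever $f$ is affine or quadratic, i.e. $\int_0^\infty D=\int_0^\infty tD=0$. The single negative mass is then used to locate the sign of $D$: near $t=0$ all masses count, so $D=\binom{k}{2}-k(k-2)=\tfrac{k(3-k)}{2}<0$ for $k\ge4$; near $t=S^-$ only the mass $S$ survives among the $|m_i|$ while every $|m_i+m_j|<S$, so $D=-(k-2)<0$. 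With these two vanishing moments and the sign pattern $-,+,-$, convexity of $f'$ yields $\int f'D\le 0$: letting $\ell$ be the affine interpolant of $f'$ through the two sign-change points, convexity gives $(f'-\ell)D\le 0$ pointwise, while $\int \ell D=0$; strict convexity makes the inequality strict whenever $D\not\equiv0$, i.e. for $k\ge4$. This is precisely Karamata's inequality (Lemma~\ref{lm:Karineq}) in integral form.

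\medskip\noindent\textbf{Main obstacle.}
The delicate point is the combinatorial claim in the last step that $D(t)$ changes sign at most twice with pattern $-,+,-$; this is exactly where the hypothesis of a \emph{unique} negative mass is indispensable, since with several negative masses the pairwise sums $|m_i+m_j|$ can produce additional sign changes of $D$ and the two-moment/convexity argument no longer applies. Everything else is either a direct computation or the standard convexity lemma above.
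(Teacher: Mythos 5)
Your opening reduction --- bounding $0\le\bigl\|\sum_i c(m_i)e_i\bigr\|^2$ by the satisfaction inequalities to get the scalar quantity $Q=\sum_{i<j}c^2(|m_i+m_j|)-(k-2)\sum_i c^2(|m_i|)$ --- is exactly the first display of the paper's proof, and your two moment identities $\int_0^\infty D=\int_0^\infty tD=0$ are correct. The proof breaks at precisely the step you flag as ``delicate'': the claim that $D(t)$ changes sign at most twice with pattern $-,+,-$ is \emph{false}, even under the unique-negative-mass hypothesis. Take $k=7$ with masses $-12,1,1,1,3,3,3$. The individual values $|m_i|$ are $1,1,1,3,3,3,12$ and the pairwise sums $|m_i+m_j|$ are $2$ (three times), $4$ (nine times), $6$ (three times), $9$ (three times), $11$ (three times); with $k-2=5$ one computes $D=-14$ on $(0,1)$, $D=+1$ on $(1,2)$, $D=-2$ on $(2,3)$, $D=+13,+4,+1$ on $(3,4),(4,6),(6,9)$, and $D=-2,-5$ on $(9,11),(11,12)$: the sign pattern is $-,+,-,+,-$, with four sign changes. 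Consequently there is no affine interpolant $\ell$ through ``the two sign-change points'', the pointwise inequality $(f'-\ell)D\le0$ has nothing to attach to, and the convexity argument collapses. (The target inequality $Q\le0$ does still hold for these masses --- e.g.\ for $f(x)=\sqrt{x}$ one gets $Q\approx-9.8$ --- so the statement you are trying to prove is not refuted, but your proposed proof of it is: the positions and number of sign changes of $D$ are genuinely not controlled by having a single negative mass.)

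The paper avoids this by arguing at first order in $g=(c^2)'$ rather than at second order in $c^2$: after substituting $m_1=-\sum_{i\ge2}m_i$ it differentiates $Q$ with respect to $m_2$ and observes that $\partial Q/\partial m_2$ is the difference of two sums of $2k-4$ values of $g$, where the argument tuple of the positive part (entries $m_2$ and $\sum_{i\ge2}m_i$, each with multiplicity $k-2$) majorizes the argument tuple of the negative part (entries $m_2+m_i$ and $\bigl(\sum_{j\ge2}m_j\bigr)-m_i$); Karamata applied to the convex $g$ then gives $\partial Q/\partial m_2\le 0$ with strictness exactly when $k>3$, and integrating down to $m_2=0$ closes the argument. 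If you want to keep your layer-cake formulation, the correct second-order criterion is not a sign pattern but nonpositivity of the double tail integral $\int_v^\infty (t-v)\,D(t)\,dt$ for every $v\ge0$ (together with your two vanishing moments); that is a different, and harder, combinatorial statement than the one you asserted, and it is not established by your write-up.
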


\begin{proof}
Consider a satisfactory configuration with unique negative mass, say, $m_1$. Clearly
\[
0 \leq \left \langle \sum_{i=1}^k c(m_i)e_i, \sum_{i=1}^k c(m_i)e_i \right \rangle = \sum_{i=1}^k c^2(m_i) + 2\sum_{1 \leq i < j \leq k} c(m_i)c(m_j) \langle e_i,e_j \rangle \leq
\]
\[
\sum_{i=1}^k c^2(m_i) + \sum_{1 \leq i < j \leq k} \left( c^2(m_i + m_j) - c^2(m_i) - c^2(m_j) \right)
\]
by the property of satisfaction.
Substitute $m_1 = - m_2 - \dots - m_k$ and use $c(-x) = c(x)$ to get
\[
0 \leq c^2 \left (\sum_{i=2}^k m_i \right) + \sum_{i=2}^k c^2(m_i) + \sum_{2 \leq i \leq k}  \left ( c^2 \left ( \left ( \sum_{j=2}^k m_j \right ) - m_i \right ) - c^2(m_i) - c^2 \left (\sum_{j=2}^k m_j \right ) \right ) +
\]
\[
\sum_{2 \leq i < j \leq k} \left( c^2(m_i + m_j) - c^2(m_i) - c^2(m_j) \right) =: f (m_2, \dots, m_k) = 
\]
\[
 \sum_{2 \leq i \leq k}    c^2 \left ( \left ( \sum_{j=2}^k m_j \right ) - m_i \right ) + \sum_{2 \leq i < j \leq k}  c^2(m_i + m_j) - (k-2) \sum_{i=2}^k c^2(m_i) - (k-2) c^2 \left (\sum_{i=2}^k m_i \right).
\]
Let $g(x)$ denote $(c^2(x))'$. Then
\[
\frac{df}{dm_2} = \sum_{3 \leq i \leq k} g \left ( \left ( \sum_{j=2}^k m_j \right ) - m_i \right ) +  \sum_{3 \leq i \leq k} g(m_2 + m_i) - (k-2) g(m_2) - (k-2) g \left (\sum_{i=2}^k m_i \right).
\]

Note that the $(2k-4)$-tuple $x_1, \dots, x_{k-2} = \sum_{i=2}^k m_i$, $x_{k-3}, \dots, x_{2k-4} = m_2$ majorizes any $(2k-4)$-tuple with entries in the interval $[m_2, \sum_{i=2}^k m_i]$, in particular the ordered non-increasing $(2k-4)$-tuple with entries $\left ( \sum_{j=2}^k m_j \right ) - m_i, m_2 + m_i$, where $3 \leq i \leq k$.
Since $g$ is strictly convex on $[0,+\infty)$ by Lemma~\ref{lm:Karineq} we have $\frac{df}{dm_2} < 0$. Thus we have $0 \leq f(0,\dots,0) = 0$, so we have equalities in all inequalities including the application of Lemma~\ref{lm:Karineq}. In particular the second $(2k-4)$-tuple contains $\sum_{i=2}^k m_i$ which is possible only for some $m_2 + m_i$, which means $k \leq 3$.

In the case $k=3$ the equalities 
\[
\langle e_i,e_j \rangle = \frac{c^2(m_i + m_j) - c^2(m_i) - c^2(m_j)}{2c(m_i)c(m_j)} 
\]
imply that angles between $e_1$, $e_2$ and $e_3$ are equal to exterior angle of the triangle with sides $c(m_1)$, $c(m_2)$ and $c(m_3)$, so there sum is $2\pi$ and they are coplanar.
\end{proof}

\begin{theorem} \label{th:stupid2}
Let $c(x)$ be a differentiable function such that $(c^2)'$ is strictly convex. Then there is no satisfactory configuration with $k > 3$. 
\end{theorem}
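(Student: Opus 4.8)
The plan is to reduce the statement to a purely one–dimensional inequality about the masses and then to prove that inequality, using Theorem~\ref{th:stupid1} to dispose of the configurations it already covers. The opening of the proof of Theorem~\ref{th:stupid1} never used that the negative mass is unique: for any satisfactory configuration one has $0\le\big\langle\sum_i c(m_i)e_i,\sum_i c(m_i)e_i\big\rangle$, and bounding the cross terms by~\eqref{eq:sat} (legitimate since $c(m_i)c(m_j)\ge 0$) together with $m_1+\dots+m_k=0$ gives, exactly as there,
\[
0\ \le\ F:=\sum_{i<j}c^2(m_i+m_j)-(k-2)\sum_{i}c^2(m_i).
\]
Thus it suffices to prove that $F<0$ whenever $k\ge 4$, all masses are nonzero and $\sum_i m_i=0$, with no restriction on the signs; then $0\le F<0$ is the desired contradiction.

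First I would split off the known cases. Since $c$ is even, replacing every mass by its negative leaves each right–hand side of~\eqref{eq:sat} unchanged and hence preserves satisfactoriness, so we may assume there are at least two positive and at least two negative masses (if exactly one mass has a given sign, Theorem~\ref{th:stupid1} already yields $F<0$). This is precisely the regime where the argument of Theorem~\ref{th:stupid1} breaks down: the quantities $m_i+m_j$ now take both signs, so $g:=(c^2)'$, which is convex only on $[0,\infty)$ (being odd, it is concave on $(-\infty,0]$), can no longer be fed into Karamata's inequality after a uniform sign normalization.

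The heart of the matter is the case $k=4$. Using $\sum_i m_i=0$, complementary pairs contribute equally, and writing the two positive masses as $\alpha,\beta$ and the magnitudes of the two negative ones as $\gamma,\delta$ with $\alpha+\beta=\gamma+\delta=:S$, the inequality $F\le 0$ becomes the Hlawka--type inequality
\[
c^2(\alpha)+c^2(\beta)+c^2(\gamma)+c^2(\delta)\ \ge\ c^2(S)+c^2(|\alpha-\gamma|)+c^2(|\alpha-\delta|).
\]
Since $\{\gamma,\delta\}$ is obtained from $\{\alpha,\beta\}$ by a single transfer, the difference of the two sides is a function of one transfer parameter, symmetric about its midpoint and, by convexity of $g$ on $[0,\infty)$ together with Lemma~\ref{lm:Karineq}, nonincreasing up to it (there all relevant arguments are nonnegative); hence it is minimized at the midpoint, where it collapses to
\[
c^2(P+Q)+c^2(P-Q)+2c^2(P)\ \ge\ c^2(2P)+2c^2(Q),\qquad P\ge Q\ge 0.
\]
This last inequality I would settle by differentiating in $Q$: the derivative of the difference equals $g(P+Q)-g(P-Q)-2g(Q)$, which is nonpositive by convexity of $g$ (most transparently when $g$ is in addition non--increasing, as it is for $c(x)=x^{p}$ with $p<1/2$), while the difference vanishes at $Q=P$; so it stays nonnegative on $[0,P]$.

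From $k=4$ I would pass to arbitrary $k$ by induction, at each stage either invoking Theorem~\ref{th:stupid1} once a configuration has been driven to a single negative mass, or the base case, by means of transfers $m_a\mapsto m_a+t$, $m_b\mapsto m_b-t$ that preserve $\sum_i m_i=0$ and whose effect on $F$ is governed by $\frac{dF}{dt}\big|_{0}=\sum_{j\ne a,b}\big\{[g(m_a+m_j)-g(m_a)]-[g(m_b+m_j)-g(m_b)]\big\}$. Strict convexity of $g$ then turns every inequality above into a strict one unless all masses vanish, which gives $F<0$ and the required contradiction; when $k=3$, equality in~\eqref{eq:sat} yields coplanarity exactly as in Theorem~\ref{th:stupid1}. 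I expect the real difficulty to lie precisely in this last reduction: because the sums $m_i+m_j$ and the increments $m_j$ take both signs, the second differences of $g$ appearing in $\frac{dF}{dt}$ are not of one sign, so plain convexity of $g$ on the half–line is insufficient, and one must exploit the evenness of $c^2$ — as already happens in the Hlawka step — to guarantee that a configuration with masses of both signs can always be pushed toward a single–sign one without decreasing $F$.
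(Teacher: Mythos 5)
There is a genuine gap, and it sits exactly where your argument has to do real work: the case of masses of both signs. Your reduction of the $k=4$ case to the ``midpoint'' inequality is false. Write the four masses as $\alpha,\beta,-\gamma,-\delta$ with $\alpha+\beta=\gamma+\delta=S$ and set
\[
D(\gamma)\ :=\ c^2(\alpha)+c^2(\beta)+c^2(\gamma)+c^2(S-\gamma)-c^2(S)-c^2(\alpha-\gamma)-c^2(\beta-\gamma),
\]
so that $F\le 0$ is equivalent to $D\ge 0$. This function is indeed symmetric about $\gamma=S/2$, but it is \emph{not} nonincreasing up to the midpoint and is \emph{not} minimized there: take $c(x)=|x|^{1/4}$ (so $c^2(x)=\sqrt{|x|}$, $g(x)=\tfrac12 x^{-1/2}$ strictly convex and decreasing) and $\alpha=3$, $\beta=1$. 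Then $D(S/2)=D(2)=\sqrt3+2\sqrt2-3\approx 1.56$, while $D(\gamma)\to 0$ as $\gamma\to 0^+$, and numerically $D(0.01)\approx 0.11<D(0.1)\approx 0.37<D(0.5)\approx 1.02<D(1)\approx 2.05>D(2)$. So $D$ increases away from the degenerate endpoint, has its infimum at the boundary (where a mass vanishes), and the midpoint is irrelevant; on the subinterval where ``all relevant arguments are nonnegative'' the Karamata comparison runs in the direction opposite to the one you need. Proving the inequality $c^2(P+Q)+c^2(P-Q)+2c^2(P)\ge c^2(2P)+2c^2(Q)$ therefore does not establish $D\ge 0$. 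Compounding this, your proof of the general-$k$ step is not a proof at all: you yourself note that the second differences of $g$ appearing in $\frac{dF}{dt}$ ``are not of one sign, so plain convexity of $g$ on the half-line is insufficient,'' and you leave the needed extra ingredient unspecified. Since the whole point of Theorem~\ref{th:stupid2} beyond Theorem~\ref{th:stupid1} is precisely the mixed-sign case, this is the crux and it is missing.

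The paper avoids the mixed-sign numerical inequality $F<0$ altogether by staying geometric. It takes the positive mass $m_1$ of largest absolute value, deletes the vectors $e_2,\dots,e_k$ carrying the other positive masses, and assigns to $e_1$ the merged mass $M=m_1+\dots+m_k$. The key computation is that for fixed $n$ the right-hand side of~\eqref{eq:sat}, namely $\frac{c^2(m-n)-c^2(m)-c^2(n)}{2c(m)c(n)}$, is increasing in $m$ (using only concavity, monotonicity and subadditivity of $c$), so every angle constraint between $e_1$ and a negatively-massed vector becomes milder after the merge; the merged configuration is still satisfactory, has a unique positive mass, and Theorem~\ref{th:stupid1} applies, with the strict monotonicity in $m$ killing the residual equality case. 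If you want to salvage your route, you would have to prove $F<0$ for arbitrary sign patterns directly, which your midpoint reduction does not do; the cleaner path is the merging argument, which reduces everything to the single-sign case you already control.
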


\begin{proof}
Suppose that there is a satisfactory configuration with vectors $e_1, \dots, e_{k+l}$, positive masses $m_1,\dots, m_k$ and negative masses $-n_1, \dots -n_l$. Without loss of generality let
\[
m_1 = \max(m_1,\dots,m_k, |n_1|, \dots |n_l|).
\]
Erase the vectors $e_2, \dots, e_k$ with the corresponding masses and replace $m_1$ with $M := m_1 + \dots + m_k$.
We claim that vectors $e_1, e_{k+1}, \dots, e_{k+l}$ and masses $M, -n_1, \dots, -n_{l}$ form a satisfactory configuration.

Indeed, the bounds~\eqref{eq:sat} on $\langle e_i, e_j \rangle$ for $i,j > k$ do not change so let us analyze the bounds~\eqref{eq:sat} on $\langle e_1,e_i \rangle$ for $i > k$.
Take the derivative
\[
\frac{d}{dm} \frac{c^2(m-n) - c^2(m) - c^2(n)}{2c(m)c(n)} = \frac{2c'(m-n)c(m-n)c(m) - c'(m)(c^2(m) + c^2(m-n)-c^2(n))}{2c^2(m)c(n)}.
\]
Since $c(x)$ is concave and monotone $c'(m-n) > c'(m) > 0$ and $2c(m-n)c(m) > c^2(m) + c^2(m-n) - c^2(n)$ because
\[
\frac {c^2(m) + c^2(m-n) - c^2(n)}{2c(m-n)c(m)}
\]
is a cosine of (acute) angle in the triangle with sides $c(m), c(m-n), c(n)$ which exists due to subadditivity of $c(x)$. So the derivative is positive and thus every condition on the angle between $e_1$ and $e_i$, $i > k$ becomes milder.

Theorem~\ref{th:stupid1} says that $l = 2$ (if $l = 1$, then $n_1$ has maximal absolute value) and if $e_1$, $e_{k+1}, e_{k+2}$ are coplanar with equalities in the inequality of satisfaction. If $k > 2$ than $M > m_1$ and these equalities break.

\end{proof}

\begin{corollary}
The Gilbert--Steiner problem for an arbitrary $d$ and $p < 1/2$ admits only triple branching.
\end{corollary}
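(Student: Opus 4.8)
The plan is to derive the corollary directly from Theorem~\ref{th:stupid2}, so the only real work is to check that the cost $c(x) = x^p$ with $0 < p < 1/2$ satisfies the hypothesis that $(c^2)'$ is strictly convex, and then to translate the conclusion about satisfactory configurations into a statement about the degree of a branching point.

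First I would compute $(c^2)'$ explicitly. Since $c^2(x) = x^{2p}$, we have $(c^2)'(x) = 2p\, x^{2p-1}$, and differentiating twice more gives $((c^2)')''(x) = 2p(2p-1)(2p-2)\, x^{2p-3}$ for $x > 0$. For $0 < p < 1/2$ the three factors $2p$, $2p-1$, $2p-2$ are respectively positive, negative, and negative, so their product is positive; since $x^{2p-3} > 0$, the second derivative of $(c^2)'$ is strictly positive on $(0,\infty)$. Hence $(c^2)'$ is strictly convex on $(0,\infty)$, which is exactly the hypothesis of Theorem~\ref{th:stupid2}. (It is worth observing in passing that at $p = 1/2$ the function $(c^2)'$ is constant and for $p > 1/2$ it is strictly concave, which is consistent with the threshold appearing in Theorem~\ref{th:main}.)

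Next I would invoke Theorem~\ref{th:stupid2} to conclude that there is no satisfactory configuration consisting of more than three vectors. Then I would recall the observation made right after the definition of a satisfactory configuration: at a branching point $A$ of an optimal flow, the unit vectors emanating along the adjacent edges, together with their nonzero signed masses (which sum to zero by flow conservation, since $A$ is not a terminal), form a satisfactory configuration --- otherwise a local tripod replacement violating~\eqref{eq:sat} would strictly decrease the Gilbert functional by Lemma~\ref{main_l}. Consequently every branching point has degree at most $3$. A branching point cannot have degree $1$ (a single nonzero mass cannot sum to zero) and, by the stated convention, degree-$2$ points are removable, so every genuine branching point has degree exactly $3$.

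The argument is essentially routine once the sign computation is recorded; the single point requiring a little care is that $c(x) = x^p$ is differentiable only on $(0,\infty)$ and not at the origin. This is harmless here, because the masses appearing in a satisfactory configuration are nonzero by the definition of a flow, so Theorem~\ref{th:stupid2} is applied only to positive arguments, where both the differentiability of $c$ and the strict convexity of $(c^2)'$ hold.
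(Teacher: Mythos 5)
Your proof is correct and follows the same route as the paper: the paper's own proof simply cites Lemma~\ref{main_l} (branching point gives a satisfactory configuration) and Theorem~\ref{th:stupid2}, leaving the verification that $(c^2)'(x)=2p\,x^{2p-1}$ is strictly convex for $0<p<1/2$ implicit, which you carry out correctly. Your added remarks on the threshold at $p=1/2$ and on differentiability away from the origin are accurate but not needed beyond what the paper records.
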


\begin{proof}
Recall that by Lemma~\ref{main_l} a branching point of degree $k$ produces a satisfactory configuration which consists of $k$ vectors. 
By Theorem~\ref{th:stupid2} this is impossible for $p < 1/2$.
\end{proof}

\begin{proposition}
For $p = 1/2$ the maximal degree of a branching point in a $d$-dimensional solution to the Gilbert--Steiner problem is at most $d+1$.
\end{proposition}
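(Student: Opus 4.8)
The plan is to exploit that $p=1/2$ makes $c^2$ \emph{linear}, so that the local inequality~\eqref{eq:sat} becomes purely geometric. For $c(x)=\sqrt{x}$ we have $c^2(m)=|m|$, hence the right-hand side of~\eqref{eq:sat} equals $\bigl(|m_i+m_j|-|m_i|-|m_j|\bigr)/\bigl(2c(m_i)c(m_j)\bigr)$, which is $\le 0$ by the triangle inequality. Thus a branching point of degree $n$ produces a satisfactory configuration $e_1,\dots,e_n$ with $\langle e_i,e_j\rangle\le 0$ for all $i\neq j$. Note that the Karamata argument of Theorems~\ref{th:stupid1} and~\ref{th:stupid2} degenerates here: $g=(c^2)'$ is constant, so Lemma~\ref{lm:Karineq} yields equality rather than a strict inequality, and a genuinely geometric argument is needed instead.

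Next I would invoke first-order optimality of the \emph{position} of the branching point, which gives the balance condition $\sum_i c(m_i)\,e_i=0$ (the higher-degree analogue of the exterior-angle description recalled after Lemma~\ref{main_l}). Writing $w_i:=c(m_i)>0$ and $D:=\operatorname{diag}(w_1,\dots,w_n)$, consider the Gram matrix $G=(\langle e_i,e_j\rangle)_{i,j}$. It is positive semidefinite with $\operatorname{rank}G=\dim\operatorname{span}(e_1,\dots,e_n)\le d$, has unit diagonal and nonpositive off-diagonal entries, and satisfies $Gw=0$, since $(Gw)_i=\langle e_i,\sum_j w_j e_j\rangle=0$.

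The key step is to read off $\operatorname{rank}G$ from this sign pattern. Let $H$ be the graph on $\{1,\dots,n\}$ with an edge $\{i,j\}$ whenever $\langle e_i,e_j\rangle<0$, weighted by $a_{ij}:=-w_iw_j\langle e_i,e_j\rangle\ge 0$, and let $L$ be its weighted Laplacian. Using $Gw=0$ one checks that the diagonal of $DGD$ equals $w_i^2=\sum_{j\neq i}a_{ij}$, while its off-diagonal entries are $w_iw_j\langle e_i,e_j\rangle=-a_{ij}$; hence $DGD=L$. As $D$ is invertible, $\operatorname{rank}G=\operatorname{rank}L=n-k$, where $k$ is the number of connected components of $H$. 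Combined with $\operatorname{rank}G\le d$ this gives the raw bound $n\le d+k$.

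For the conclusion I would argue that each connected component of $H$ is exactly one branching point. The kernel identity $\ker G=D\,\ker L$ forces each component $C_t$ to be separately balanced, $\sum_{i\in C_t}w_i e_i=0$, with the linear spans of distinct components mutually orthogonal; on a single component the Gram matrix has a one-dimensional kernel, so $|C_t|=\operatorname{rank}(G|_{C_t})+1\le d+1$. Since distinct components are orthogonal and independently balanced, a point with $k\ge 2$ is a coincidence of $k$ non-interacting branchings rather than a single irreducible one; separating them (and discarding one-dimensional components, which are straight segments to be eliminated) leaves branching points of degree $|C_t|\le d+1$. I expect the main obstacle to be precisely this last reduction—formalising that orthogonal, separately balanced sub-stars constitute distinct branching points of the optimal flow, so that the degree of a single branching point is bounded by $d+1$ even though the raw count only satisfies $n\le d+k$.
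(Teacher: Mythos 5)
Your reduction to a Gram-matrix/Laplacian statement is a genuinely different route from the paper (which argues by induction on the dimension $d$, projecting the whole configuration onto the orthogonal complement of one of its vectors), and most of it is sound: for $p=1/2$ the right-hand side of~\eqref{eq:sat} is indeed $\le 0$; the balance condition $\sum_i c(m_i)e_i=\bar{0}$ does hold at a branching point of an optimal flow, since such a point is free to move; the identity $DGD=L$ follows from $Gw=0$ exactly as you compute; and $\operatorname{rank}G=\operatorname{rank}L=n-k\le d$ gives $n\le d+k$. The genuine gap is the last step, which you yourself flag. The proposition bounds the degree of a branching point of a solution, i.e.\ the number of edges at a single vertex of the flow, and a vertex whose star splits into $k\ge 2$ orthogonal, separately balanced components still has degree $n$, not $\max_t|C_t|$. ``Separating'' the components produces a different flow on a different graph; to make that legal you would have to show either that such a coincidence cannot occur in a minimizer, or that the separated flow is again a minimizer for the same data -- neither is done, so as written you have proved only $n\le d+k$.

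The gap closes immediately if you keep the strict part of~\eqref{eq:sat} that you discarded. For masses of opposite signs, $m_i>0>m_j$, either $m_i+m_j=0$ and the bound is $-1$, or $|m_i+m_j|<|m_i|+|m_j|$ and the bound is strictly negative; either way $\langle e_i,e_j\rangle<0$. This is exactly the distinction the paper records (angles ``at least $\pi/2$'' for equal signs, ``strictly greater than $\pi/2$'' for opposite signs). Since $\sum_i m_i=0$ and all masses are nonzero, both sign classes are nonempty, so in your graph $H$ every positive-mass vertex is adjacent to every negative-mass vertex; hence $H$ contains a spanning complete bipartite subgraph and is connected, $k=1$, and $n\le d+1$ with no need for any separation argument. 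With that one observation your Laplacian proof is complete and arguably cleaner than the paper's projection induction, at the price of invoking the balance condition $\sum_i c(m_i)e_i=\bar{0}$, which the paper's proof does not need.
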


\begin{proof}
Let the vectors parallel to edges emanating from the branching point be $e_1, \dots, e_k$.
The satisfactory inequality~\eqref{eq:sat} in the case $p = 1/2$ says that the angles between masses of the same sign are at least $\pi/2$ and the angles between masses of different signs are strictly greater than $\pi/2$.

We use induction in $d$. The base $d = 2$, $k \leq 3$ is obvious, since at least one angle should be obtuse. For the step choose any vector $v \in \{e_1,\dots,e_k\}$ and project everything onto the hyperplane $v^\perp$. Note that the scalar product of projections of any vectors from $\{e_1,\dots,e_k\} \setminus v$ decreases. 
Also no two vectors can share the projection. If zero is not represented as a projection we are immediately done. 
In the remaining case if $v, -v$ have masses of the same sign then there is no vector with mass of the opposite sign; otherwise there are no more vectors at all.
\end{proof}

\section{Examples} \label{sec:lower}

The following theorem was proved in~\cite{volz2013gilbert} for a general Minkowski space (the authors do not use the condition $c(0) > 0$ through the proof).

\begin{theorem} \label{th:volz}
Consider a single source at $q$ and sinks $p_1,\dots,p_n \in \mathbb{R}^d$. 
Let the mass demand at $p_i$ be $m_i$ for $i=1,\dots,n$. 
Then a star flow with a unique branching point at the origin $o$ is optimal if and only if
\begin{equation}\label{eq:wstp}
\sum_{i=1}^n c(m_i) e_i + c\left( \sum_{i=1}^n m_i \right) e = \bar{0}
\end{equation}
and
\begin{equation}\label{eq:split1}
\left \langle \sum_{i\in I} c(m_i)e_i,\sum_{i\in I} c(m_i)e_i \right \rangle \leq c^2 \left (\sum_{i\in I} m_i \right ) \quad \text{ for all } I\subseteq\{1,\dots,n\},
\end{equation}
where $e_i$ is the unit vector codirected with $op_i$ and $e$ is the unit vector codirected with $oq$.
\end{theorem}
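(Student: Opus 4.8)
The plan is to read the two conditions as, respectively, a first-order stationarity condition for the position of the single branching point and a family of first-order ``no-splitting'' conditions, and then to upgrade the necessity of these first-order conditions to a genuine global lower bound by means of a calibration built out of the star itself.

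First I would prove necessity. For a fixed star topology the cost is $F(x)=c(\sum_i m_i)\,|qx|+\sum_i c(m_i)\,|p_i x|$, a convex function of the branching location $x$ that is smooth away from the terminals. If $o$ minimizes it then $\nabla F(o)=\bar 0$; computing the gradient with $\nabla_x|qx|=(x-q)/|x-q|$ and the orientation conventions for $e,e_i$ turns this into exactly~\eqref{eq:wstp}, and convexity makes the stationarity condition also sufficient for $o$ to be the best branching point of the star. For~\eqref{eq:split1} I would test against a splitting competitor: given $I\subseteq\{1,\dots,n\}$, introduce a new point $o'=o+\varepsilon u$ (unit $u$, small $\varepsilon>0$), route the combined mass $\sum_{i\in I}m_i$ along $oo'$, and reattach the sinks $p_i$, $i\in I$, at $o'$. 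A first-order expansion gives cost change $\varepsilon\big(c(\sum_{i\in I}m_i)-\langle u,\sum_{i\in I}c(m_i)e_i\rangle\big)+o(\varepsilon)$; optimizing over $u$ and demanding non-negativity yields~\eqref{eq:split1}.

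The heart of the argument is sufficiency, and here I would argue by calibration. Since an optimal flow is a tree (Proposition 7.8 of~\cite{bernot2008optimal}), it suffices to bound from below the cost of an arbitrary tree competitor $T$ rooted at $q$ with leaves $p_1,\dots,p_n$. Each edge $e$ of $T$, of length $\ell_e$ and with displacement $\vec d_e$ oriented towards the sinks, carries by mass balance the mass $\theta_e=\sum_{i\in I_e}m_i$ of the sinks $I_e$ lying downstream of it; assign to $e$ the vector $w_e:=\sum_{i\in I_e}c(m_i)e_i$. By~\eqref{eq:split1} applied to $I_e$ we have $\|w_e\|\le c(\theta_e)$, so Cauchy--Schwarz gives $c(\theta_e)\,\ell_e\ge\langle \vec d_e,w_e\rangle$. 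Summing over $e$ and regrouping by sink, $\sum_e\langle \vec d_e,w_e\rangle=\sum_i c(m_i)\big\langle\sum_{e:\,i\in I_e}\vec d_e,\,e_i\big\rangle$, and the inner sum telescopes along the $q$-to-$p_i$ path to $p_i-q$. Hence
\[
\mathrm{Cost}(T)\ \ge\ \sum_{i=1}^n c(m_i)\,\langle p_i-q,\,e_i\rangle,
\]
a bound independent of the topology of $T$. It remains to check that the star attains it: the single-sink edges are tight by construction, and for the source edge~\eqref{eq:wstp} gives $\sum_i c(m_i)e_i=c(\sum_i m_i)\tfrac{o-q}{|oq|}$, which makes Cauchy--Schwarz an equality there too. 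A short computation using~\eqref{eq:wstp} confirms that the right-hand side equals $c(\sum_i m_i)|oq|+\sum_i c(m_i)|op_i|$, the star cost.

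The main obstacle is precisely the sufficiency step: the first-order conditions alone only give local optimality, and the real content is a lower bound valid against \emph{every} competing topology at once. The calibration supplies this, and the point I would emphasize is that~\eqref{eq:split1} is deliberately quantified over \emph{all} subsets $I$ because the downstream sink-set $I_e$ of a competitor edge can be any subset of $\{1,\dots,n\}$; thus each edge inequality $\|w_e\|\le c(\theta_e)$ is furnished by exactly one instance of~\eqref{eq:split1}, while~\eqref{eq:wstp} is what forces the source edge of the star to be tight. I would also dispose of the degenerate boundary cases where $o$ coincides with $q$ or with some $p_i$ (replacing gradients by subdifferentials) and note that the reduction to tree competitors, together with the telescoping identity $\sum_{e:\,i\in I_e}\vec d_e=p_i-q$, uses only mass balance and acyclicity.
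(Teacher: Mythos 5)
Your proposal is correct and follows essentially the same route the paper takes for the closely analogous Theorem~\ref{th:stupid3} (the paper itself only cites~\cite{volz2013gilbert} for Theorem~\ref{th:volz}): necessity via one-sided derivatives of a splitting perturbation, and sufficiency via the telescoping calibration $\sum_{e}\langle \vec d_e, \sum_{i\in I_e}c(m_i)e_i\rangle$ bounded edgewise by~\eqref{eq:split1} and Cauchy--Schwarz. Your additional handling of the source edge --- using~\eqref{eq:wstp} both as the stationarity condition for the branching location and to make the source-edge inequality tight so the star attains the calibrated lower bound --- is exactly what is needed to pass from the source-at-the-branching-point version to the stated theorem.
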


Note that Theorem~\ref{th:volz} means that local optimality of a star implies global optimality.

\begin{example} \label{ex:dplus1}
Let $e_1, \dots, e_d$ be a standard basis of $\mathbb{R}^d$ and $c(x) = \sqrt{x}$. 
Consider any positive masses $m_1,\dots,m_d$ and define $e$ by 
\[
\sum_{i=1}^n \sqrt{m_i} \cdot  e_i + \sqrt{  \sum_{i=1}^n m_i } \cdot e = \bar{0}.
\]
Clearly, $e$ has unit length, so~\eqref{eq:wstp} holds. Also for every $I\subseteq\{1,\dots,d\}$ one has an equality in~\eqref{eq:split1}.

Thus for $p = 1/2$ there is an example of $(d+1)$-branching.
\end{example}

This example can be extended to the case $p \geq 1/2$ as follows:

\begin{example} \label{ex:dplus1p}
Let $e_1, \dots, e_d$ be unit vectors in $\mathbb{R}^d$ and $c(x) = x^p$ such that $\langle e_i, e_j \rangle = a$,
where
\[
\sqrt{d(1 + (d-1)a)} = d^p \quad \quad \mbox{so that} \quad \quad a = \frac{d^{2p-1}-1}{d-1}.
\]
Consider masses $m_1 = \dots = m_d = 1$ and define $e$ by 
\[
\sum_{i=1}^n  e_i + d^p \cdot e = \bar{0}.
\]
Clearly, $e$ has unit length, so~\eqref{eq:wstp} holds. 
Inequality~\eqref{eq:split1} is equivalent to
\[
|I| + |I|\cdot(|I| - 1) a \leq |I|^{2p}.
\]
and
\[
\frac{d^{2p-1}-1}{d-1} \leq \frac{|I|^{2p-1}-1}{|I|-1}
\]
The derivative 
\[
\frac{d}{dx} \frac{x^{2p-1}-1}{x-1} = \frac{1 - (2p-1) x^{2p-2} + (2p-2) x^{2p-1}}{(x - 1)^2}.
\]
The denominator is always positive, so
\[
\frac{d}{dx} \left ( 1 - (2p-1) x^{2p-2} + (2p-2) x^{2p-1} \right ) = (2p-2)(2p-1) x^{2p-3} (x-1) < 0
\]
means that $1 - (2p-1) x^{2p-2} + (2p-2) x^{2p-1} < 1 - (2p-1) + (2p-2) = 0$, so $\frac{x^{2p-1}-1}{x-1}$ decreases. 

Thus for $p \geq 1/2$ there is an example of $(d+1)$-branching.
\end{example}

\section{Degree of a source} \label{sec:source}

The following theorem is a version of Theorem~\ref{th:volz} which bounds the degree of a source.

\begin{theorem}\label{th:stupid3}
Suppose that sinks $p_1,\dots,p_n\in\mathbb{R}^d$, and a single source in the origin $o$. Let the flow demand at $p_i$ be $m_i$ for $i=1,\dots,n$. 
Then a star flow is optimal if and only if
\begin{equation}\label{eq:split}
\left \langle \sum_{i\in I} c(m_i)e_i,\sum_{i\in I} c(m_i)e_i \right \rangle \leq c^2 \left (\sum_{i\in I} m_i \right ) \quad \text{ for all } I\subseteq\{1,\dots,n\},
\end{equation}
where $e_i$ is a unit vector codirected with $op_i$.
\end{theorem}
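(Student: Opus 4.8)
The statement is an ``if and only if,'' and both directions revolve around the same family of inequalities~\eqref{eq:split}, read once as a first-variation condition at the source and once as a calibration. The plan is to prove necessity by a local perturbation at $o$ and sufficiency by exhibiting a vector calibration whose pointwise constraints are exactly~\eqref{eq:split}.

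For necessity, suppose the star is optimal and that~\eqref{eq:split} fails for some $I$, i.e.\ $\|\sum_{i\in I} c(m_i)e_i\| > c(\sum_{i\in I} m_i)$. I would introduce an auxiliary branching point $b=\varepsilon w$ close to $o$, with $w$ the unit vector in the direction of $\sum_{i\in I} c(m_i)e_i$, and reroute the sinks of $I$ through $b$: one new edge $o\to b$ of mass $\sum_{i\in I}m_i$ together with edges $b\to p_i$ for $i\in I$. Expanding $|p_i-\varepsilon w|=|p_i|-\varepsilon\langle w,e_i\rangle+O(\varepsilon^2)$, the cost changes to first order by $\varepsilon\big(c(\sum_{i\in I}m_i)-\|\sum_{i\in I}c(m_i)e_i\|\big)<0$, contradicting optimality. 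Hence~\eqref{eq:split} must hold for every $I$.

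For sufficiency, assume~\eqref{eq:split} and show the star beats every competitor. Since an optimal flow is acyclic (Proposition 7.8 of~\cite{bernot2008optimal}) and has a single source, it is enough to verify $\mathrm{cost}(\mathrm{star})\le \mathrm{cost}(T)$ for an arbitrary tree flow $T$ rooted at $o$, because the star is itself a competitor and the optimum is attained on such a tree. I would calibrate each sink by the fixed vector $v_i:=c(m_i)e_i$. For an edge $e=(x,y)$ of $T$, oriented away from the source, let $S_e\subseteq\{1,\dots,n\}$ be the set of sinks it feeds; by mass conservation its mass is $\theta_e=\sum_{i\in S_e}m_i$, so~\eqref{eq:split} with $I=S_e$ together with Cauchy--Schwarz gives $c(\theta_e)\,|y-x|\ge \langle\sum_{i\in S_e}v_i,\,y-x\rangle$. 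Summing over the edges of $T$ and telescoping along each source-to-sink path (where $\sum(y-x)=p_i$) yields $\mathrm{cost}(T)\ge\sum_i\langle v_i,p_i\rangle=\sum_i c(m_i)|p_i|=\mathrm{cost}(\mathrm{star})$, with equality on the star itself since each edge $op_i$ is parallel to $e_i$ and makes Cauchy--Schwarz tight. This is the same calibration underlying Theorem~\ref{th:volz}; the difference is that, $o$ being a fixed terminal rather than a free branching point, no balance condition of type~\eqref{eq:wstp} is imposed and the telescoping simply terminates at the source.

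The two estimates above are routine; the step needing the most care is the reduction to tree competitors and the bookkeeping of the edge masses $\theta_e=\sum_{i\in S_e}m_i$. Rigorously, a general flow should be handled through its decomposition into source-to-sink paths (equivalently, by invoking acyclicity to pass to a tree), after which the telescoping is exact. I expect this decomposition/acyclicity reduction, rather than any individual inequality, to be the main obstacle to a fully rigorous write-up.
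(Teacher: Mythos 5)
Your proposal is correct and follows essentially the same route as the paper's proof: necessity via a first-order perturbation introducing a point $\varepsilon w$ near $o$ in the direction of $\sum_{i\in I}c(m_i)e_i$ (the paper phrases this as the one-sided derivative of $\psi_e(t)$ at $t=0$, then optimizes over the unit vector $e$), and sufficiency via the calibration $v_i=c(m_i)e_i$ with telescoping along the source-to-sink paths of a tree competitor and the Cauchy--Schwarz bound from~\eqref{eq:split} applied edge by edge. The paper likewise restricts to tree competitors with unique paths $P_i$, so your flagged reduction step is handled there in exactly the same (implicit) way.
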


\begin{proof}
($\Rightarrow$) 
Assume without loss of generality that $I\neq\varnothing$. Consider a competitor in which some mass goes from the source point $o$ via vector $te$, for some $t\in\mathbb{R}_+$ and a unit vector $e$. In this new network, for each index $i\notin I$, the source $p_i$ remains connected to $o$ with flow demand $m_i$ and for indices $i\in I$, the source $p_i$ is now connected to $te$ with flow demand $m_i$, and $te$ is connected to $o$ with flow demand $\sum_{i\in I} m_i$. Minimality of the original network then implies that, for all unit vectors $e$, the function
\[
\psi_e(t) = \sum_{i\in I} c(m_i) \cdot \left(\|p_i - te\| - \|p_i\| \right) + c\left (\sum_{i\in I} m_i\right) |t|
\]
attains its minimum at $t=0$. Although $\psi_e$ may not be differentiable at $0$, we can still take the right-hand derivative to conclude that
\[
0 \leq\lim_{t\to0^+} \frac{\psi_e(t)}{t} 
= \lim_{t\to0^+} \left( \sum_{i\in I} c(m_i) \frac{\|p_i-te\|-\|p_i\|}{t} + c\left (\sum_{i\in I} m_i\right) \right).
\]
This inequality yields
\[
\left\langle \sum_{i\in I} c(m_i)e_i, e\right\rangle \leq c \left (\sum_{i\in I} m_i\right).
\]
Substituting  $e$ to be parallel $\sum_{i\in I} c(m_i)e_i$, be obtain the desired inequality.

($\Leftarrow$) Now assume that a flow satisfies~\eqref{eq:split}. Let $T$ be any competitor for the given data. 
For each $i=1,\dots,n$, let $P_i$ denote the unique path in $T$ from $p_i$ to $o$, say
\[
P_i = x^{(i)}_1 x^{(i)}_2 \dots x^{(i)}_{k_i}, \quad x^{(i)}_1 = p_i,\quad x^{(i)}_{k_i}=o,
\]
with the edges $x^{(i)}_j x^{(i)}_{j+1}$ distinct for $j=1,\dots,k_i-1$. For each edge $e$ of $T$, define the index set 
$S_e = \{ i : e \text{ lies on } P_i\}$.
Then the flow on edge $e$ is $\sum_{i\in S_e} m_i$, and its cost is
\[
c\left (\sum_{i\in S_e} m_i\right)\,\|x-y\|,
\]
where $e$ connects vertices $x$ and $y$. Hence, the total cost of $T$ is
\[
c(T) := \sum_{e=xy \text{ edge of }T} c\left (\sum_{i\in S_e} m_i\right)\,\|x-y\|. 
\]

On the other hand, the cost of the star network is
\[
\sum_{i=1}^n c(m_i) \langle e_i, p_i \rangle = \sum_{i=1}^n c(m_i) \sum_{j=1}^{k_i-1} \langle e_i, x_j^{(i)} - x_{j+1}^{(i)} \rangle = \sum_{e=xy \text{ edge of }T} \left \langle \sum_{i\in S_e} c(m_i)e_i,x-y \right\rangle
\]
Then, by applying inequality \eqref{eq:split} we have
\[
\left \langle \sum_{i\in S_e} c(m_i)e_i,x-y \right\rangle \leq c\left (\sum_{i\in S_e} m_i\right)\,\|x-y\|.
\]
Summing over all edges $e$ shows that the cost of the star is not greater than $C(T)$. Thus, the star network is a minimal Gilbert flow.
\end{proof}

Similarly for Examples~\ref{ex:dplus1} and~\ref{ex:dplus1p}, Theorem~\ref{th:stupid3} provides star with source of degree $2d$ with edges $\pm e_i$ for $p = 1/2$.

\section{Applications and further work} \label{sec:discussion}

\paragraph{Universal irrigation flow.} Having determined the maximal possible degrees of a source and a branching point one may try to construct a \textit{universal} solution to the Gilbert--Steiner problem, id est a solution which contains every possible combinatorial structure with the same $p$ and $d$ as a substructure. 
Such results for Steiner trees were obtained in~\cite{paolini2015example,cherkashin2023self,paolini2023steiner} and used in~\cite{basok2024uniqueness}.

At this point it seems to be possible only for a certain range of $p$, $d$ and only for irrigation setup. 
In particular, for $p = 1/2$ and $d = 2$ the following tree is universal.

\begin{figure}[h]
    \centering  
    \begin{tikzpicture}[line width=1.4pt, scale=2, rotate = 45]
    \tikzset{branch/.style={blue}}
\def\lambda{0.7}
    \newcommand{\drawtree}[2]{%
        \ifnum#2>0
            \pgfmathsetmacro{\myscale}{\lambda^(#1 - #2)}
            \draw[branch] (0,0) -- (0,1);
            \begin{scope}[shift={(0,1)}]
                \begin{scope}[rotate=45,scale=\myscale]
                    \drawtree{#1}{\numexpr#2-1\relax}
                \end{scope}
                \begin{scope}[rotate=-45,scale=\myscale]
                    \drawtree{#1}{\numexpr#2-1\relax}
                \end{scope}
            \end{scope}
        \fi

        \ifnum#2=0
         \draw[green!60!black, dashed, line width=1pt, domain=90:180, samples=30]
            plot ({14*cos(\x) +14*sin(\x)},
                  {14*sin(\x) -14*cos(\x)});
        \fi
    }

    \foreach \angle in {0, 90, 180, 270} {
        \begin{scope}[rotate=\angle]
            \drawtree{6}{5}
        \end{scope}
    }

    \fill[red] (0,0) circle (2pt);

\end{tikzpicture}
    \caption{An example of a universal irrigation flow}
    \label{fig1}
\end{figure}

\paragraph{Classification of triple branching.}
It would be interesting to describe all cost functions for which the degree of \textit{planar} branching is exactly 3.
One of the motivations is the existence of \textit{exact} (but very slow) algorithm in this case.
There is no known algorithm in $\mathbb{R}^d$ for $d > 2$ (see Problem 15.12 in~\cite{bernot2008optimal}). One of the reasons is that the coordinates of the branching points, even for the Steiner problem for a general simplex in three-dimensional space, are no longer expressed in radicals~\cite{smith1992find}.

\paragraph{Picture in high dimensions.} 
Maximum possible degree of a source is heavily related with spherical codes and kissing number~\cite{BDM12}. The kissing number of a mathematical space is defined as the greatest number of non-overlapping unit spheres that can be arranged in that space such that they each touch a common unit sphere. The answer is known only in dimensions 1, 2, 3, 4, 8, 24.
Note that the presence of masses and conditions for $|I| > 2$ in Theorem~\ref{th:stupid3} make the situation even more intricate. 

A major question for the maximum possible degree of a branching point is whether there exists an upper bound which depends only on $d$. A positive resolution seems inaccessible to current methods.

\paragraph{Acknowledgments.} The research is supported by Bulgarian NSF grant KP-06-N72/6-2023.

\bibliography{main}

\begin{thebibliography}{10}

\bibitem{basok2024uniqueness}
Mikhail Basok, Danila Cherkashin, Nikita Rastegaev, and Yana Teplitskaya.
\newblock On uniqueness in {S}teiner problem.
\newblock {\em International Mathematics Research Notices},
  2024(10):8819--8838, 2024.

\bibitem{bernot2008optimal}
Marc Bernot, Vicent Caselles, and Jean-Michel Morel.
\newblock {\em Optimal transportation networks: models and theory}.
\newblock Springer, 2008.

\bibitem{BDM12}
Peter Boyvalenkov, Stefan Dodunekov, and Oleg Musin.
\newblock A survey on the kissing numbers.
\newblock {\em Serdica Mathematical Journal}, 38:507--522, 2012.

\bibitem{cherkashin2025branching}
Danila Cherkashin and Fedor Petrov.
\newblock Branching points in the planar {G}ilbert--{S}teiner problem have
  degree 3.
\newblock {\em Pure and Applied Functional Analysis}, 10:331--338, 2025.

\bibitem{cherkashin2023self}
Danila Cherkashin and Yana Teplitskaya.
\newblock A self-similar infinite binary tree is a solution to the {S}teiner
  problem.
\newblock {\em Fractal and Fractional}, 7(5):414, 2023.

\bibitem{gilbert1967minimum}
Edgar~N. Gilbert.
\newblock Minimum cost communication networks.
\newblock {\em Bell System Technical Journal}, 46(9):2209--2227, 1967.

\bibitem{lippmann2022theory}
Peter Lippmann, Enrique Fita~Sanmart{\'\i}n, and Fred~A. Hamprecht.
\newblock Theory and approximate solvers for branched optimal transport with
  multiple sources.
\newblock {\em Advances in Neural Information Processing Systems}, 35:267--279,
  2022.

\bibitem{paolini2023steiner}
Emanuele Paolini and Eugene Stepanov.
\newblock On the {S}teiner tree connecting a fractal set.
\newblock {\em arXiv preprint arXiv:2304.01932}, 2023.

\bibitem{paolini2015example}
Emanuele Paolini, Eugene Stepanov, and Yana Teplitskaya.
\newblock An example of an infinite {S}teiner tree connecting an uncountable
  set.
\newblock {\em Advances in Calculus of Variations}, 8(3):267--290, 2015.

\bibitem{smith1992find}
Warren~D. Smith.
\newblock How to find {S}teiner minimal trees in {E}uclidean $d$-space.
\newblock {\em Algorithmica}, 7:137--177, 1992.

\bibitem{volz2013gilbert}
Marcus~G. Volz, Marcus Brazil, Charl~J. Ras, Konrad~J. Swanepoel, and Doreen~A.
  Thomas.
\newblock The {G}ilbert arborescence problem.
\newblock {\em Networks}, 61(3):238--247, 2013.

\end{thebibliography}
\bibliographystyle{plain}

\end{document}